\newcommand{\R}{\mathbb{R}}
\newtheorem{thm}{Theorem}
\newtheorem{lemma}[thm]{Lemma}
\newtheorem{prop}[thm]{Proposition}
\theoremstyle{definition}
\newtheorem{definition}[thm]{Definition}
\theoremstyle{remark}
\newtheorem{remark}{Remark}
\def\madm{m_{\mathrm{ADM}}}
\def\miso{m_{\mathrm{iso}}}
\def\misoc{m_{\mathrm{iso}}^{\mathrm{c}}}
\def\misoql{m_{\mathrm{iso}}^{\mathrm{QL}}}
\def\tmiso{m_{\mathrm{iso}}^{\mathrm{H}}}
\title{A note on Huisken's isoperimetric mass}
\date{\today}
\author{Jeffrey L. Jauregui}
\address{Union College, Department of Mathematics, 807 Union St.,
Schenectady, NY 12308, USA}
\email{jaureguj@union.edu}
\author{Dan A. Lee}
\address{Graduate Center and Queens College, City University of New York, 365 Fifth Avenue,
New York, NY 10016, USA}
\email{dan.lee@qc.cuny.edu}
\author{Ryan Unger}
\address{Stanford University, Department of Mathematics,  Building 380, Stanford, CA 94305, USA \&
 UC Berkeley, Department of Mathematics, Evans Hall, Berkeley, CA 94720, USA}
\email{runger@stanford.edu}
\begin{document}

\begin{abstract}
In this short note, we explain that Huisken's isoperimetric mass is always nonnegative for elementary reasons.
\end{abstract}

\maketitle 
The \emph{ADM mass}, $\madm$, is well known to represent the ``total mass'' of an asymptotically flat manifold \cite{ADM}. Although the formula for the ADM mass involves derivatives of the metric, one may desire a notion of mass for metrics that are not regular enough for the ADM mass to be defined. To this end, G.~Huisken proposed the \emph{isoperimetric mass} as a stand-in for the ADM mass for low regularity metrics~\cites{Hui1,Hui2, Hui3}. We now recall the precise definitions.

\begin{definition}\label{def_AF}
Let $n\ge3$, and let $M$ be a smooth $n$-manifold (possibly with boundary) equipped with a $C^0$ Riemannian metric $g$.
We say that $(M, g)$ is a \emph{one-ended\footnote{Note that according to our definition, $(M, g)$ is necessarily complete. Modifications to our results for manifolds with multiple ends are straightforward.} $C^0$-asymptotically flat manifold} if there exists a compact set $K\subset M$ such that $M\smallsetminus K$ is diffeomorphic to $\R^n \smallsetminus \bar{B}$  (for some closed coordinate ball $\bar{B}$) and such that in this coordinate chart (which we refer to as an \emph{asymptotically flat end}),
\[ |g_{ij} - \delta_{ij}| \leq C |x|^{-\tau} \]
for some constants $C,\tau > 0$.
\end{definition}
This definition should be contrasted with the more common (stronger) concept of \emph{one-ended $C^2$-asymptotically flat manifolds}, which also requires decay of the first two derivatives of $g_{ij}$, and typically also requires $\tau > \frac{n-2}{2}$ and integrability of the scalar curvature. See, for example, Definition 3.5 of \cite{Lee:book}. These manifolds have a well-defined ADM mass. Huisken made the remarkable observation that the ADM mass can be detected by isoperimetric quantities, leading to the following definitions.

\begin{definition}[Huisken] \label{def_miso}
Let $(M, g)$ be a one-ended $C^0$-asymptotically flat $3$-manifold\footnote{These definitions generalize to higher dimensions in a straightforward way (see the appendix of \cite{Jau}), but we will keep our presentation to three dimensions for simplicity.}.  Let $\Omega$ be a bounded open subset of $M$ with finite perimeter. Then the \emph{quasilocal isoperimetric mass} of $\Omega$ is
\[ \misoql(\Omega) := \frac{2}{|\partial^* \Omega|} \left(|\Omega| - \frac{1}{6\sqrt{\pi}} |\partial^* \Omega|^{3/2} \right), \]
where  $|\Omega|$ and $|\partial^* \Omega|$ denote the volume and perimeter\footnote{See the appendix of \cite{JL2} for a discussion of perimeter for $C^0$ Riemannian metrics. In general, if $\partial \Omega$ is a regular surface, then $|\partial^* \Omega| = |\partial \Omega|$, the area of $\partial \Omega$.} of $\Omega$, respectively.
\begin{enumerate}
\item In \cite{Hui1}, Huisken defined 
\[ \misoc(M,g) :=  \limsup_{R \to \infty} \misoql(\mathcal{B}_R),\]
where $\mathcal{B}_R$ is the region of $M$ enclosed by the coordinate sphere of radius $R$.
\item In \cite{Hui3}, Huisken defined
\[ \tmiso(M,g) :=  \limsup_{|\partial^*\Omega| \to \infty} \misoql(\Omega) ,\]
where the limit superior is taken over all bounded open subsets of $M$ with finite perimeter.
\item It is also convenient to define (as in~\cites{EM, JL}, for example)
\[ \miso(M, g) := \sup_{\{\Omega_i\}} \left(\limsup_{i \to \infty} \misoql(\Omega_i)\right),\]
where the supremum is taken over all sequences $\{\Omega_i\}$ of bounded open sets with finite perimeter that exhaust $M$.
\end{enumerate}
\end{definition}
From the definitions it is clear that
\[ \misoc \le \miso \le \tmiso.\]
We decorate $\misoc$ to emphasize that it is a ``coordinate'' based definition, and $\tmiso$ to indicate that it is Huisken's original version of the definition, but we choose to regard $\miso$ as the main definition of \emph{isoperimetric mass}, because it is easy to see that $\misoc$ and $\miso$ are purely asymptotic quantities which depend only on the geometry in a neighborhood of infinity. We define the isoperimetric mass of an asymptotically flat \emph{end} by arbitrarily filling it in and then applying Definition \ref{def_miso}:

\begin{definition}
Let $(\mathcal E, g)$ be an asymptotically flat end. We define $\misoc(\mathcal E, g)$ and $\miso(\mathcal E, g)$ to be the corresponding $\misoc$ and $\miso$ of  \emph{any} one-ended asymptotically flat manifold in which $(\mathcal E,g)$ is isometrically embedded.
\end{definition}

Huisken asserted that for any one-ended $C^2$-asymptotically flat $3$-manifold with nonnegative scalar curvature (whose boundary is either empty or minimal), $\tmiso$ should be equal to $\madm$. This was established by the following sequence of results. First, we have the following theorem, proved in \cite{FST}, where P.~Miao is credited for the argument.
\begin{thm}[Miao]\label{thm_FST}
Let $(\mathcal E,  g)$ be $C^2$-asymptotically flat end of any dimension. Then $\misoc=\madm$.
\end{thm}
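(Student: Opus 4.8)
\emph{Proof proposal.} The plan is to compute the large-$R$ asymptotics of $\misoql$ on coordinate balls and to recognize the subleading term as $\madm$. All computations take place in a fixed asymptotically flat coordinate chart on the end, where we write $g_{ij} = \delta_{ij} + h_{ij}$; since the end is $C^2$-asymptotically flat, $|h| = O(|x|^{-\tau})$, $|\partial h| = O(|x|^{-\tau-1})$, $|\partial^2 h| = O(|x|^{-\tau-2})$ with $\tau > \frac{n-2}{2}$, and $R_g \in L^1$, so $\madm$ is defined; contributions from the fixed compact core and from inner boundaries are always $O(1)$ and harmless. Let $\mathcal B_R$ be the region enclosed by the coordinate sphere $S_R := \{|x| = R\}$, write $dA_\delta$ for its Euclidean area element, $\partial_r$ for the radial derivative, $\omega := |\mathbb S^{n-1}|$, and $h_{\nu\nu} := h_{ij}\frac{x^i x^j}{|x|^2}$. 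Expanding the Riemannian volume form gives $\sqrt{\det g} = 1 + \tfrac12 \delta^{ij}h_{ij} + O(|x|^{-2\tau})$, and expanding the area form induced on $S_R$ gives $\bigl(1 + \tfrac12(\delta^{ij}h_{ij} - h_{\nu\nu}) + O(|x|^{-2\tau})\bigr)\,dA_\delta$. Writing $P(R) := \int_{\mathcal B_R}\delta^{ij}h_{ij}\,dx$ and $I(R) := \int_{S_R}(\delta^{ij}h_{ij} - h_{\nu\nu})\,dA_\delta$ and integrating,
\[ |\mathcal B_R| = \frac{\omega}{n}R^n + \frac12 P(R) + O(R^{n-2\tau}), \qquad |S_R| = \omega R^{n-1} + \frac12 I(R) + O(R^{n-1-2\tau}). \]

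In dimension $n$ the quasilocal isoperimetric mass (appendix of \cite{Jau}) has the form $\misoql(\Omega) = c_n\,|\partial^*\Omega|^{-2/(n-1)}\bigl(|\Omega| - \frac{1}{n\omega^{1/(n-1)}}|\partial^*\Omega|^{n/(n-1)}\bigr)$, with $c_n$ normalized so that $\misoql = \madm$ on Schwarzschild (so $c_3 = 2$). Substituting the two expansions above and expanding the powers of $|S_R|$, the order-$R^n$ terms inside the parentheses cancel identically (this is exactly what makes $\misoql$ vanish on Euclidean balls), leaving $|\mathcal B_R| - \frac{1}{n\omega^{1/(n-1)}}|S_R|^{n/(n-1)} = \frac12\bigl(P(R) - \frac{R}{n-1}I(R)\bigr) + O(R^{n-2\tau})$; also $c_n|S_R|^{-2/(n-1)} = \frac{2}{\omega R^2}\bigl(1 + O(R^{-\tau})\bigr)$ by the normalization of $c_n$ and the expansion of $|S_R|$. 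Since $\tau > \frac{n-2}{2}$, we have $O(R^{n-2\tau}) = o(R^2)$, so the theorem reduces to the estimate
\[ P(R) - \frac{R}{n-1}I(R) \;=\; \omega\,\madm\,R^2 + o(R^2), \]
because granting it, $\misoql(\mathcal B_R) = \frac{2}{\omega R^2}\bigl(1 + o(1)\bigr)\bigl(\tfrac{\omega}{2}\madm\,R^2 + o(R^2)\bigr) \to \madm$.

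To prove the estimate I would convert the bulk integral $P(R)$ into boundary data. Since $\Delta\bigl(\frac{|x|^2}{2n}\bigr) = 1$, Green's second identity (applied on the coordinate annulus between a fixed sphere and $S_R$) gives
\[ P(R) = \frac{1}{2n}\int_{\mathcal B_R}|x|^2\,\Delta(\delta^{ij}h_{ij})\,dx + \frac{R}{n}\int_{S_R}\delta^{ij}h_{ij}\,dA_\delta - \frac{R^2}{2n}\int_{S_R}\partial_r(\delta^{ij}h_{ij})\,dA_\delta. \]
Now expand the scalar curvature about the flat metric: $\Delta(\delta^{ij}h_{ij}) = \partial_i\partial_j h_{ij} - R_g + Q$, where $Q = Q(h,\partial h,\partial^2 h)$ collects the terms of $R_g$ that are at least quadratic in $h$, so $Q = O(|x|^{-2\tau-2})$ and $\int_{\mathcal B_R}|x|^2|Q|\,dx = O(R^{n-2\tau}) = o(R^2)$; moreover $\int_{\mathcal B_R}|x|^2 R_g\,dx = o(R^2)$ by dominated convergence, since $R_g \in L^1$ and $0 \le |x|^2/R^2 \le 1$ on $\mathcal B_R$. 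Finally, integrating $\int_{\mathcal B_R}|x|^2\,\partial_i\partial_j h_{ij}\,dx$ by parts twice rewrites it in terms of $P(R)$ and surface integrals over $S_R$; after collecting the surface terms one finds the combination $\int_{S_R}(\partial_i h_{ij} - \partial_j h_{ii})\,\nu^j\,dA_\delta$, which by definition of the ADM mass converges to $2(n-1)\omega\,\madm$. Solving the resulting linear relation for $P(R)$ gives $P(R) = \omega\,\madm\,R^2 + \frac{R}{n-1}I(R) + o(R^2)$, which is the claimed estimate; hence $\misoc = \limsup_{R\to\infty}\misoql(\mathcal B_R) = \madm$.

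The only genuine work is in the last paragraph: one must perform the two integrations by parts carefully and verify that the surface terms reassemble into precisely the ADM flux integrand, and — equally important — check that every remainder (the $O(|x|^{-2\tau})$ errors in the volume and area forms, the quadratic scalar-curvature tail $Q$, the weighted integral $\int|x|^2 R_g$, and the errors from expanding the powers of $|S_R|$) is genuinely of lower order. This is the one place where the hypotheses enter: $\tau > \frac{n-2}{2}$ controls the algebraic remainders and $R_g \in L^1$ controls the curvature integral. Nothing deeper — no curvature comparison, no minimal-surface theory — is needed, which fits with the argument being attributed to P.~Miao.
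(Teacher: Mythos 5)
Your proposal is correct, and since the paper does not reprove this result but simply cites \cite{FST}, there is nothing to contrast it with there: your argument --- expanding $|\mathcal{B}_R|$ and $|S_R|$ to first order in $h_{ij}$, reducing $\misoql(\mathcal{B}_R)$ to the combination $P(R)-\tfrac{R}{n-1}I(R)$, and converting $P(R)$ to the ADM flux integral via the linearized scalar curvature, with $\tau>\tfrac{n-2}{2}$ and $R_g\in L^1$ absorbing all remainders --- is exactly the asymptotic computation attributed to Miao in that reference. The one point worth tightening is the normalization of $c_n$: rather than asserting both the Schwarzschild normalization and the identity $c_n|S_R|^{-2/(n-1)}=\tfrac{2}{\omega R^2}(1+O(R^{-\tau}))$, note that your own computation shows $\misoql(\mathcal{B}_R)\to \tfrac{1}{2}c_n\omega^{(n-3)/(n-1)}\madm$, so the Schwarzschild normalization forces $c_n=2\omega^{(3-n)/(n-1)}$ (consistent with $c_3=2$), after which the conclusion $\misoc=\madm$ follows.
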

We emphasize that this result is an asymptotic computation that does not involve scalar curvature. The next result is a simple computation due to the first- and second- named authors. 
\begin{prop}[Proposition 37 of \cite{JL}]
Let $(M, g)$ be a one-ended $C^0$-asymptotically flat manifold of any dimension. Then $\tmiso=\miso$.
\end{prop}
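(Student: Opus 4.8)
The plan is to establish the nontrivial inequality $\tmiso\le\miso$; the reverse is already recorded above. If $\tmiso=-\infty$ there is nothing to prove, so assume $\tmiso>-\infty$ and fix bounded open sets $\Omega_i\subset M$ of finite perimeter with $|\partial^*\Omega_i|\to\infty$ and $\misoql(\Omega_i)\to\tmiso\in(-\infty,+\infty]$. Such a sequence need not exhaust $M$ (in $\R^n$ one may take round balls escaping to infinity), so the idea is to enlarge it: set $\widetilde\Omega_i:=\Omega_i\cup\mathcal{B}_{\rho_i}$ for radii $\rho_i\to\infty$ chosen to grow so slowly that $\misoql$ is essentially undisturbed, and then observe that $\{\widetilde\Omega_i\}$ exhausts $M$ while $\limsup_i\misoql(\widetilde\Omega_i)\ge\tmiso$, whence $\miso\ge\tmiso$.

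The only computation required is a crude monotonicity estimate. Writing (in dimension $3$; the general case is identical after adjusting exponents) $\misoql(\Omega)=\frac{2|\Omega|}{|\partial^*\Omega|}-\frac{1}{3\sqrt{\pi}}|\partial^*\Omega|^{1/2}$, we note that for fixed volume this is a decreasing function of the perimeter. Since $|\widetilde\Omega_i|\ge|\Omega_i|$ and, by subadditivity of the perimeter functional (which remains valid for $C^0$ metrics; see the appendix of \cite{JL2}), $|\partial^*\widetilde\Omega_i|\le|\partial^*\Omega_i|+|\partial\mathcal{B}_{\rho_i}|$, inserting these bounds into the formula and estimating with $\sqrt{a+b}-\sqrt a\le b/\sqrt a$, $\,a^{-1}-(a+b)^{-1}\le b/a^2$, and the identity $2|\Omega_i|=|\partial^*\Omega_i|\,\misoql(\Omega_i)+\frac{1}{3\sqrt{\pi}}|\partial^*\Omega_i|^{3/2}$ yields
\[
\misoql(\widetilde\Omega_i) \ge \misoql(\Omega_i)\Bigl(1-\frac{|\partial\mathcal{B}_{\rho_i}|}{|\partial^*\Omega_i|}\Bigr)-\frac{2}{3\sqrt{\pi}}\,\frac{|\partial\mathcal{B}_{\rho_i}|}{|\partial^*\Omega_i|^{1/2}}
\]
whenever $|\partial^*\widetilde\Omega_i|>|\partial^*\Omega_i|$; in the opposite case one trivially has $\misoql(\widetilde\Omega_i)\ge\misoql(\Omega_i)$, which is stronger.

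It remains to choose the radii. Since $|\partial^*\Omega_i|\to\infty$ while $\rho\mapsto|\partial\mathcal{B}_\rho|$ is a fixed function tending to infinity, we may take $\rho_i\to\infty$ so slowly that, say, $|\partial\mathcal{B}_{\rho_i}|\le|\partial^*\Omega_i|^{1/4}$ for all large $i$; then both quotients in the displayed inequality tend to $0$. If $\tmiso$ is finite this gives $\liminf_i\misoql(\widetilde\Omega_i)\ge\tmiso$; if $\tmiso=+\infty$, then $\misoql(\Omega_i)\to+\infty$ forces $\misoql(\widetilde\Omega_i)\to+\infty$. In either case $\limsup_i\misoql(\widetilde\Omega_i)\ge\tmiso$. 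Finally, each $\widetilde\Omega_i$ is a bounded open set of finite perimeter containing $\mathcal{B}_{\rho_i}$; since $\rho_i\to\infty$ and the coordinate balls exhaust $M$, every compact subset of $M$ lies in $\widetilde\Omega_i$ for all large $i$, so $\{\widetilde\Omega_i\}$ is an admissible exhausting sequence. Hence $\miso\ge\limsup_i\misoql(\widetilde\Omega_i)\ge\tmiso$, which together with $\miso\le\tmiso$ proves the proposition.

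The step I expect to require the most care is the trade-off in the choice of $\rho_i$: one needs $\rho_i\to\infty$ for the enlarged sequence to exhaust $M$, yet $|\partial\mathcal{B}_{\rho_i}|$ must be negligible against $|\partial^*\Omega_i|$ so as not to degrade $\misoql$, and these are compatible precisely because $|\partial^*\Omega_i|\to\infty$. A secondary subtlety, easily missed, is that for rough metrics $\tmiso=+\infty$ genuinely occurs, in which case $|\Omega_i|$ may far exceed $|\partial^*\Omega_i|^{3/2}$ and cannot be swallowed by a universal error term; the estimate above is arranged so that this contribution enters only as $\misoql(\Omega_i)$ times a factor we have forced to be $o(1)$, and therefore does no harm. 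The remaining inputs — subadditivity of perimeter, monotonicity of volume under inclusion, and the fact that coordinate balls exhaust $M$ — are routine.
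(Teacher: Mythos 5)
Your proof is correct. The note does not actually reprove this proposition (it simply cites Proposition 37 of \cite{JL}), but your argument --- extracting a near-optimal sequence with $|\partial^*\Omega_i|\to\infty$, absorbing it into an exhaustion by unioning with coordinate balls $\mathcal B_{\rho_i}$ whose radii grow slowly enough that the added perimeter perturbs $\misoql$ only by $o(1)$, using monotonicity of $\misoql$ in volume and perimeter together with subadditivity of perimeter --- is essentially the argument of \cite{JL}, and is the mirror image of the technique used in Lemma~\ref{lemma2} and the proof of Theorem~\ref{thm_main} of this note (where an exhaustion is instead corrected by unioning with large far-away balls).
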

In other words, $\tmiso$ is the same as $\miso$ on the spaces contemplated by Definition~\ref{def_miso}. However, note that the definition of $\tmiso$ immediately generalizes to any Riemannian manifold, in which case it will depend on the geometry of all of the ends, and hence may not equal the value of $\miso$ on a given asymptotically flat end.\footnote{There is yet another variant of isoperimetric mass appearing in Section 7 of~\cite{EM}, which is defined in terms of the isoperimetric profile function of $(M, g)$. This variant is also the same as $\miso$ and $\tmiso$ for one-ended $C^0$-asymptotically flat manifolds but possibly different on more general spaces.}

\begin{remark}
  Define negative mass Schwarzschild space to be
  \begin{equation*}
      M=\R^3\smallsetminus\left\{|x|\le \frac{|m|}{2}\right\},\quad g_{ij}=\left(1+\frac{m}{2|x|}\right)^4\delta_{ij},
  \end{equation*}
  where $m<0$. This is an incomplete Riemannian manifold with one asymptotically flat end and one end corresponding to the singularity, and  its $\tmiso$ equals the $\miso$ of its asymptotically flat end. This follows from the argument of \cite[Proposition 37]{JL} due to the existence of an isoperimetric inequality for ``sufficiently large'' sets: there exist constants $C,c>0$ such that if  $\Omega\subset M$ is a set of finite perimeter for which $|\partial^*\Omega|\ge c$, then $|\Omega|\le C|\partial^*\Omega|^{3/2}$.  
\end{remark}

Finally, the first- and second- named authors fully justified Huisken's assertion in~\cite{JL}. Another proof was later given by O.~Chodosh, M.~Eichmair, Y.~Shi, and H.~Yu in \cite{CESY}.
\begin{thm}[\cites{JL, CESY}]\label{thm:JL}
Let $(M,g)$ be a one-ended $C^2$-asymptotically flat $3$-manifold with nonnegative scalar curvature, whose boundary is either empty or minimal. Then $\miso=\madm$.
\end{thm}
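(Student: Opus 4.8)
The inequality $\madm\le\miso$ is already in hand: by \Cref{thm_FST} we have $\misoc=\madm$, and $\misoc\le\miso$ by definition. So the real content is the reverse inequality $\miso\le\madm$, and by \cite[Proposition~37]{JL} this is equivalent to $\tmiso\le\madm$; this is the only place the scalar curvature and boundary hypotheses enter. I would first dispose of the degenerate case. Since $(M,g)$ is $C^2$-asymptotically flat with $R\ge0$ and empty or minimal boundary, the positive mass theorem gives $\madm\ge0$, and $\madm=0$ forces $(M,g)$ to be isometric to Euclidean $\R^3$ (in particular $\partial M=\emptyset$). In that case the Euclidean isoperimetric inequality gives $\misoql(\Omega)\le0$ for every finite-perimeter $\Omega$, while coordinate spheres give $\miso\ge0$, so $\miso=0=\madm$. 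Hence from now on we may assume $\madm>0$.

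\emph{Reduction to isoperimetric regions.} Fix a bounded finite-perimeter $\Omega$ and write $V=|\Omega|$, $P=|\partial^*\Omega|$. A direct differentiation shows $\misoql$ is strictly decreasing in the perimeter at fixed volume, so if $\Omega^*$ is any region of volume $V$ with least perimeter $A(V):=|\partial^*\Omega^*|\le P$, then $\misoql(\Omega)\le\misoql(\Omega^*)$. Let $\{\Omega_i\}$ be any sequence with $P_i=|\partial^*\Omega_i|\to\infty$ and $V_i=|\Omega_i|$. If $V_i\le\frac{1}{6\sqrt\pi}P_i^{3/2}$ then $\misoql(\Omega_i)\le0\le\madm$; otherwise $V_i>\frac{1}{6\sqrt\pi}P_i^{3/2}\to\infty$, so $V_i\to\infty$ along those indices. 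Since $\madm>0$, large isoperimetric regions exist in $(M,g)$ \cite{EM}: for all sufficiently large $V$ there is a region $\Omega^*_V$ of volume $V$ and least perimeter $A(V)$, smooth by the regularity theory for volume-constrained minimizers in dimension three. Applying the monotonicity above gives $\misoql(\Omega_i)\le\misoql(\Omega^*_{V_i})$ for $i$ large, so
\[ \tmiso=\limsup_{i}\misoql(\Omega_i)\le\max\!\Bigl(0,\ \limsup_{V\to\infty}\misoql(\Omega^*_V)\Bigr), \]
and it remains to prove $\limsup_{V\to\infty}\misoql(\Omega^*_V)\le\madm$.

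\emph{The main step.} This is the heart of the matter. It cannot be extracted from crude $C^1$-closeness of $\partial\Omega^*_V$ to a coordinate sphere, since the Euclidean inequality $|\Omega|\le\frac{1}{6\sqrt\pi}|\partial^*\Omega|^{3/2}$ is, for large sets, off by a correction of order $\madm\,|\partial^*\Omega|$ — indeed the desired bound is equivalent to a sharp isoperimetric-type inequality carrying exactly this correction — so one needs the $o(1)$-precise asymptotic $\misoql(\Omega^*_V)\to\madm$. I would obtain this from the sharp structure theory of large isoperimetric regions in $C^2$-asymptotically flat $3$-manifolds: because $\partial M$ is minimal, for $V$ large $\Omega^*_V$ contains the compact core and all of $\partial M$, so $\partial^*\Omega^*_V$ is a CMC sphere in the asymptotically flat end with no boundary contributions, and by \cite{EM} it agrees, to the order needed, with the leaves of the canonical CMC foliation of the end; a direct expansion of volume and perimeter for those leaves, in the spirit of the computation behind \Cref{thm_FST}, then yields $\misoql(\Omega^*_V)\to\madm$. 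Equivalently, the step can be organized around a Huisken-type monotonicity of $\misoql$ along $\{\Omega^*_V\}$, in which $R\ge0$ enters through the Gauss equation and the Gauss--Bonnet theorem for the spherical CMC leaves; this is the route of \cite{JL}, with a different argument in \cite{CESY}. Combined with the reduction, this gives $\tmiso\le\madm$, hence $\miso=\madm$.

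\emph{Main obstacle.} The difficulty is concentrated entirely in the last step: that large isoperimetric regions exist at all (where $\madm>0$, i.e.\ $R\ge0$ together with the positive mass theorem, is genuinely needed, since a minimizing sequence could otherwise shed volume to infinity), that they swallow the minimal boundary so no unfavorable boundary terms appear, and — most delicately — that the $\madm$-order correction to the isoperimetric profile is captured exactly, either through the fine description of these regions or through the monotonicity formula. By contrast, the reduction in the second paragraph and the degenerate-case analysis are elementary.
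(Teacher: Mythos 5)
First, a caveat: the paper does not prove this statement --- it is quoted as Theorem~\ref{thm:JL} from \cite{JL} and \cite{CESY} --- so your sketch can only be measured against those references, not against an in-paper argument. Your elementary reductions are correct: $\madm\le\miso$ follows from Theorem~\ref{thm_FST} together with $\misoc\le\miso$; the passage from $\miso$ to $\tmiso$ is \cite[Proposition 37]{JL}; $\misoql$ is indeed strictly decreasing in perimeter at fixed volume, so $\misoql(\Omega)\le\misoql(\Omega^*)$ for an isoperimetric region $\Omega^*$ of the same volume; and the $\madm=0$ case is correctly disposed of by rigidity in the positive mass theorem.

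The gap is that the entire analytic content of the theorem --- in effect the inequality $|\Omega|-\frac{1}{6\sqrt{\pi}}|\partial^*\Omega|^{3/2}\le\frac{1}{2}\madm\,|\partial^*\Omega|+o(|\partial^*\Omega|)$ for large sets --- is deferred to the literature rather than argued, and the scaffolding you build around it both misattributes the known proofs and quietly assumes hard theorems. (i) Existence of isoperimetric regions for all large volumes in a general $C^2$-asymptotically flat $3$-manifold with $R\ge0$ is itself one of the main results of \cite{CESY}; it is not available from \cite{EM}, whose existence and uniqueness theory assumes $C^0$-asymptotics to Schwarzschild with $m>0$, a much stronger hypothesis, and ``$\madm>0$'' alone does not prevent a minimizing sequence from shedding volume to infinity without further input. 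Likewise, getting $\misoql(\Omega^*_V)\to\madm$ from closeness of $\partial\Omega^*_V$ to CMC leaves requires the area to precision $o(V^{1/3})$ and the volume to precision $o(V^{2/3})$, which is far beyond what the basic foliation theory gives for general decay rates. (ii) The route of \cite{JL} does not pass through isoperimetric regions or CMC foliations at all: one replaces $\Omega$ by its strictly outward minimizing hull (which can only increase $\misoql$), runs Huisken--Ilmanen weak inverse mean curvature flow \cite{HI} from the hull, and uses Geroch monotonicity of the Hawking mass --- this is where $R\ge0$, Gauss--Bonnet, and the minimal-boundary hypothesis actually enter --- to integrate a differential inequality for $|\Omega_t|-\frac{1}{6\sqrt{\pi}}|\Sigma_t|^{3/2}$ directly, for arbitrary large sets; your ``reduction to isoperimetric regions'' is therefore valid but unnecessary for that argument, while for \cite{CESY} it is precisely the hard part. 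As written, the proposal is an accurate map of where the difficulty is concentrated, but it does not supply the main step.
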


Unlike the previous two results, Theorem~\ref{thm:JL} is a global theorem that relies on completeness and nonnegative scalar curvature in an essential way. In fact, it fails without those global hypotheses. Recent work of G.~Antonelli, M.~Fogagnolo, S.~Nardulli, and M.~Pozzetta~\cite{AFNP} shows that if $(\mathcal E,g)$ is a $3$-dimensional $C^0$-asymptotically flat \emph{end} with nonnegative scalar curvature (in a suitable weak sense), then $\miso\ge0$, even if $\madm<0$.
 In particular, any \emph{negative mass} Schwarzschild space actually has \emph{nonnegative} isoperimetric mass. The proof of \cite{AFNP} uses the notion of inverse mean curvature flow from a point (\`a la Huisken--Ilmanen \cite{HI}), results of Y.~Shi on estimating the volumes and areas of level sets of inverse mean curvature flow in the presence of nonnegative scalar curvature \cite{Shi}, and some technical results about $C^0$ Riemannian metrics. 

\medskip

The main purpose of this note is to point out that $\miso$ is always nonnegative, and that this fact has nothing to do with scalar curvature. It is a direct consequence of the elementary fact that large and far off-center coordinate balls in $C^0$-asymptotically flat ends have volumes and perimeters close to those of  Euclidean balls.

\begin{thm}
\label{thm_main}
Let $(\mathcal E, g)$ be a $C^0$-asymptotically flat end of any dimension. Then $\miso\ge0$.
\end{thm}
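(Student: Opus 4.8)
Fix any one-ended $C^0$-asymptotically flat manifold $(M,g)$ into which $(\mathcal E,g)$ embeds isometrically, so $\miso(\mathcal E,g)=\miso(M,g)$ by definition. Since $\miso(M,g)=\sup_{\{\Omega_i\}}\bigl(\limsup_{i\to\infty}\misoql(\Omega_i)\bigr)$, it suffices to produce a single exhausting sequence $\{\Omega_i\}$ of bounded open sets of finite perimeter with $\limsup_i\misoql(\Omega_i)\ge0$; in fact we will arrange $\lim_i\misoql(\Omega_i)=0$. Two elementary observations drive the argument. First, a round Euclidean ball has vanishing quasilocal isoperimetric mass: in dimension $3$, a ball of radius $r$ has volume $\tfrac43\pi r^3$ and perimeter $4\pi r^2$, and $\tfrac1{6\sqrt\pi}(4\pi r^2)^{3/2}=\tfrac43\pi r^3$, so $\misoql=0$; in every dimension the normalizing constant in $\misoql$ (see the appendix of \cite{Jau}) is \emph{defined} so that this holds, and the only structural feature we use is that $\misoql(\Omega)=\kappa_n|\partial^*\Omega|^{-\frac2{n-1}}\bigl(|\Omega|-\lambda_n|\partial^*\Omega|^{\frac n{n-1}}\bigr)$ for positive dimensional constants $\kappa_n,\lambda_n$, where $\lambda_n$ is exactly the value making Euclidean balls null. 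Second, if $B(p,r)=\{x:|x-p|<r\}$ is a coordinate ball contained in the asymptotically flat end, then $|g_{ij}-\delta_{ij}|\le C|x|^{-\tau}\le C(|p|-r)^{-\tau}=:\varepsilon$ throughout $B(p,r)$; hence $\sqrt{\det g}=1+O(\varepsilon)$ there, and the metric induced on the smooth coordinate sphere $\partial B(p,r)$ is within $O(\varepsilon)$ of the Euclidean one, so
\[
|B(p,r)|=\omega_n r^n\bigl(1+O(\varepsilon)\bigr),\qquad |\partial^*B(p,r)|=n\omega_n r^{n-1}\bigl(1+O(\varepsilon)\bigr),
\]
where $\omega_n=|B_1^n|$. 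Substituting these into the formula for $\misoql$, the leading (Euclidean) term cancels by the normalization of $\lambda_n$, leaving $|\misoql(B(p,r))|\le Cr^{n-2}\varepsilon$.

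Next I would build the sequence. Fix an exhaustion $\mathcal B_{R_1}\subset\mathcal B_{R_2}\subset\cdots$ of $M$ by coordinate balls, $R_i\to\infty$; crudely, $|\mathcal B_{R_i}|=O(R_i^n)$ and $|\partial^*\mathcal B_{R_i}|=O(R_i^{n-1})$. Choose radii $r_i$ with $r_i/R_i^{n}\to\infty$, and then centers $p_i$ with $|p_i|-r_i>R_i$ and with $|p_i|-r_i$ large enough that $r_i^{n-2}(|p_i|-r_i)^{-\tau}\to0$; put $\Omega_i:=\mathcal B_{R_i}\cup B(p_i,r_i)$. Since $|p_i|-r_i\to\infty$, the ball $B(p_i,r_i)$ lies in the end once $i$ is large, and since $|p_i|-r_i>R_i$ the closures $\overline{\mathcal B_{R_i}}$ and $\overline{B(p_i,r_i)}$ are disjoint; consequently $|\Omega_i|=|\mathcal B_{R_i}|+|B(p_i,r_i)|$ and $|\partial^*\Omega_i|=|\partial^*\mathcal B_{R_i}|+|\partial^*B(p_i,r_i)|$, the boundary of $\Omega_i$ being a disjoint union of two smooth coordinate spheres. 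Each $\Omega_i$ contains $\mathcal B_{R_i}$, so $\{\Omega_i\}$ exhausts $M$; if one's notion of ``exhausting sequence'' also requires $\Omega_i\subset\Omega_{i+1}$, this is easily arranged by enlarging $R_{i+1}$ to contain $B(p_i,r_i)$ before selecting $r_{i+1}$ and $p_{i+1}$.

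Finally I would verify $\misoql(\Omega_i)\to0$. Writing $V_i=|B(p_i,r_i)|$, $A_i=|\partial^*B(p_i,r_i)|$, $v_i=|\mathcal B_{R_i}|$, $a_i=|\partial^*\mathcal B_{R_i}|$, the choice $r_i/R_i^{n}\to\infty$ gives $v_i/V_i\to0$ and $a_i/A_i\to0$, so $B(p_i,r_i)$ dominates $\Omega_i$ in both volume and perimeter. Treating $\mathcal B_{R_i}$ as a small perturbation of $B(p_i,r_i)$, expanding $\misoql$ in the ratios $v_i/V_i$ and $a_i/A_i$, and inserting the near-Euclidean values of $V_i$ and $A_i$, one finds $\misoql(\Omega_i)$ to be a finite sum of terms: one of the form $O\bigl(r_i^{n-2}(|p_i|-r_i)^{-\tau}\bigr)$, arising from the non-flatness of $B(p_i,r_i)$, and the others of the form $O(r_i^{-\alpha}R_i^{\beta})$ with $\alpha>0$ and $\beta\ge0$ (for instance $O(r_i^{-1}R_i^{n-1})$ and $O(r_i^{-2}R_i^{n})$), arising from the small correction $\mathcal B_{R_i}$. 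Each of these tends to $0$ by the two defining choices $r_i/R_i^{n}\to\infty$ and $r_i^{n-2}(|p_i|-r_i)^{-\tau}\to0$, hence $\lim_i\misoql(\Omega_i)=0$ and therefore $\miso(\mathcal E,g)\ge0$. The only genuine work here is the bookkeeping of the three nested scales $R_i\ll r_i\ll|p_i|$ in that expansion, together with the fact --- standard, but for merely $C^0$ metrics not entirely vacuous (see the appendix of \cite{JL2} for the relevant notion of perimeter) --- that a uniformly $C^0$-small perturbation of the metric changes volumes and perimeters by a correspondingly small relative amount; in our application this is invoked only for the smooth spheres $\partial\mathcal B_{R_i}$ and $\partial B(p_i,r_i)$, where it reduces to an elementary comparison of area elements.
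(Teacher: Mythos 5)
Your proof is correct and follows essentially the same strategy as the paper's: you take the union of an exhausting family of coordinate balls with large, far off-center coordinate balls whose volumes and perimeters are nearly Euclidean by $C^0$-asymptotic flatness, so that the off-center ball dominates and drives $\misoql$ to zero along the exhaustion. The only difference is presentational --- you grow the off-center radius and track explicit rates among the three scales $R_i\ll r_i\ll|p_i|$, whereas the paper fixes the radius at each stage and argues by soft limits (its Lemmas \ref{lemma1} and \ref{lemma2}).
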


In retrospect, the fact that negative mass Schwarzschild actually has $\miso > \madm$ is not surprising: Huisken's definition of isoperimetric mass is based on the relationship between mass, area, and enclosed volume for the symmetry spheres in Schwarzschild spaces, and the fact that those symmetry spheres are isoperimetric surfaces when the mass of the Schwarzschild metric is positive \cite{Bray}. However, it is known that the symmetry spheres in negative mass Schwarzschild spaces are \emph{not} isoperimetric (as they are unstable CMC surfaces) \cite{CGGK}. It is natural to wonder what the value of
$\miso$ is for negative mass Schwarzschild, as it is related to understanding the isoperimetric properties of these spaces.

One of the main motivations for studying the isoperimetric mass came from a desire to generalize the celebrated positive mass theorem \cites{SY, Wit} to a low-regularity setting. That is, given a one-ended $C^0$-asymptotically flat manifold with nonnegative scalar curvature (in some weak sense), can one prove that (some weakened version of) mass is nonnegative? Huisken \cite{Hui2} expressed the hope that this\footnote{Huisken's preferred notion of weak scalar curvature in this context is defined in relation to isoperimetric properties of small balls. In particular, it differs from the one used in~\cite{AFNP}.} might be true for $\miso$, but our observation clearly renders this conclusion to be vacuously true. One might still wonder whether a \emph{rigidity} statement might hold: that is, if such a manifold has $\miso=0$, must it be Euclidean space? Another question is whether a low-regularity generalization of the positive mass theorem might hold for $\misoc$.

The proof of Theorem \ref{thm_main} is elementary, and we break it down into two lemmas. For simplicity, we write the proof in three dimensions, but the modifications needed for higher dimensions are trivial. Note that all volumes and perimeters below are computed with respect to $g$. 
\begin{lemma}
\label{lemma1}
Let $(M,g)$ be a one-ended $C^0$-asymptotically flat $3$-manifold, and let $\Omega$ be a bounded subset of $M$. For any $V>0$ and $\varepsilon>0$, there exists a bounded region $E\subset M$ with smooth boundary such that $E$ has positive distance from $\Omega$ and satisfies
\begin{align*}
|E| &\geq V,  \\
|\partial E| &\geq (36\pi)^{1/3} V^{2/3},\\
\left||E| - \frac{1}{6\sqrt{\pi}} |\partial E|^{3/2}\right| &\le \varepsilon.
\end{align*}    
\end{lemma}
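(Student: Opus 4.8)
The plan is to take $E$ to be a large round coordinate ball in the end $\mathcal E$ whose center has been translated far out toward infinity. The point is the elementary identity that a Euclidean ball $B_r\subset\R^3$ of radius $r$ has $|B_r|=\tfrac43\pi r^3$ and $|\partial B_r|=4\pi r^2$, and that these realize the \emph{equality} case of the Euclidean isoperimetric inequality:
\[
\tfrac1{6\sqrt\pi}\,(4\pi r^2)^{3/2}=\tfrac43\pi r^3,\qquad\text{equivalently}\qquad |\partial B_r|=(36\pi)^{1/3}|B_r|^{2/3};
\]
in particular the quantity $|B_r|-\tfrac1{6\sqrt\pi}|\partial B_r|^{3/2}$ vanishes. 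Since $g$ is $C^0$-close to the flat metric near infinity, a far off-center coordinate ball has $g$-volume and $g$-perimeter arbitrarily close to these Euclidean values, and all three assertions of the lemma follow, with strict room to spare in the first two.

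Concretely, fix the asymptotically flat chart $M\smallsetminus K\cong\R^3\smallsetminus\bar B$ of Definition~\ref{def_AF}, so $|g_{ij}-\delta_{ij}|\le C|x|^{-\tau}$ there. As $\Omega$ is bounded and $(M,g)$ is complete, $\bar\Omega\cup K$ is compact, hence contained in the region $\{|x|<R_1\}$ of the chart for some $R_1>0$. Fix $r>0$ so that the Euclidean ball of radius $r$ has volume exactly $2V$; the factor $2$ is the ``room to spare.'' For $\rho>R_1+r$, set $E_\rho:=\{x:|x-(\rho,0,0)|<r\}$, viewed as a subset of $M$ via the chart: this is a bounded open set whose boundary is the smooth coordinate sphere of radius $r$ about $(\rho,0,0)$, so $|\partial E_\rho|=|\partial^*E_\rho|$ is its $g$-area (see the footnote to Definition~\ref{def_miso}). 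Since $\bar E_\rho$ lies in $\{|x|\ge\rho-r>R_1\}$, it is a compact set disjoint from $\bar\Omega$, so $E_\rho$ has positive $g$-distance from $\Omega$. Finally, on $\bar E_\rho$ we have $|x|\ge\rho-r$, so $|g_{ij}-\delta_{ij}|\le C(\rho-r)^{-\tau}=:\eta(\rho)$, and $\eta(\rho)\to0$ as $\rho\to\infty$.

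It remains to estimate the $g$-volume and $g$-area of $E_\rho$ in terms of $\eta=\eta(\rho)$. Writing $|E_\rho|=\int_{E_\rho}\sqrt{\det(g_{ij})}\,dx$ and using that $A\mapsto\sqrt{\det A}$ is continuous at the identity matrix, one gets $\sqrt{\det(g_{ij})}=1+O(\eta)$ on $E_\rho$ once $\eta$ is small, so $|E_\rho|=(1+O(\eta))\,2V$. Likewise the induced metric of the \emph{fixed} smooth surface $\partial E_\rho$ has components within $O(\eta)$ of the flat ones, so $|\partial E_\rho|=(1+O(\eta))\,4\pi r^2$ and hence $\tfrac1{6\sqrt\pi}|\partial E_\rho|^{3/2}=(1+O(\eta))\,2V$. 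Therefore $|E_\rho|\to 2V>V$ and $|\partial E_\rho|\to(36\pi)^{1/3}(2V)^{2/3}>(36\pi)^{1/3}V^{2/3}$, while, using the Euclidean identity above,
\[
\Bigl||E_\rho|-\tfrac1{6\sqrt\pi}|\partial E_\rho|^{3/2}\Bigr|=\bigl|\,2V(1+O(\eta))-2V(1+O(\eta))\,\bigr|=O(\eta),
\]
which tends to $0$ as $\rho\to\infty$. Choosing $\rho$ large enough that all three quantities meet their required tolerances and setting $E:=E_\rho$ finishes the proof.

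The only subtlety is that $g$ is merely $C^0$, so one cannot invoke any control on derivatives or curvature of $g$; this is exactly why coordinate balls (rather than, say, geodesic balls) are used. The $g$-volume of a coordinate ball and the $g$-area of a coordinate sphere depend only on the pointwise values of $g$ on these fixed objects, and those values are uniformly close to flat. The one genuine cancellation is in the third estimate, where the leading terms $2V$ cancel precisely because round Euclidean balls are isoperimetric equality cases; the remainder is $O(\eta)$, which is everything we need.
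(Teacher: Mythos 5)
Your proposal is correct and follows essentially the same route as the paper's own proof: translate a fixed-radius coordinate ball far out along the end, use $C^0$-asymptotic flatness to see that its $g$-volume and $g$-perimeter converge to the Euclidean values, and conclude from the equality case of the Euclidean isoperimetric inequality. The only cosmetic difference is that you normalize the radius so the Euclidean volume is $2V$ rather than merely exceeding $V$, and you spell out the $O(\eta)$ estimates that the paper leaves implicit.
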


\begin{proof}
Let $V>0$ and choose $R$ large enough so that  $\tfrac{4\pi}{3} R^3  > V$.
For sufficiently large $i$, let $E_i$ be the coordinate ball of radius $R$, centered at $(i,0,0)$ in the asymptotically flat coordinate chart for $M$. For all sufficiently large $i$, $E_i$ has positive distance from $\Omega$, and
 by $C^0$-asymptotic flatness,
\begin{align*}
    \lim_{i \to \infty} |E_i| &= \tfrac{4\pi}{3} R^3  > V ,\\
    \lim_{i \to \infty} |\partial E_i| &= 4\pi R^2 > (36\pi)^{1/3} V^{2/3}.
\end{align*}
In particular,
\[ \lim_{i \to \infty} \left(|E_i| - \frac{1}{6\sqrt{\pi}} |\partial E_i|^{3/2}\right)=0.\]
The claim follows by taking $E$ to be $E_i$ for $i$ sufficiently large.
\end{proof} 

\begin{lemma}
\label{lemma2}
Let $(M,g)$ be a one-ended $C^0$-asymptotically flat $3$-manifold, and let $\Omega$ be a bounded subset of $M$.
For any $\varepsilon>0$, there exists a bounded open set $\tilde \Omega \supset \Omega$ with smooth boundary such that
\begin{align*}
|\misoql(\tilde\Omega)|\le\varepsilon.
\end{align*}    
\end{lemma}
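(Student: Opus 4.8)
The plan is to take $\tilde\Omega$ to be the disjoint union of a fixed smooth region containing $\Omega$ and an enormous coordinate ball sitting far out in the asymptotically flat end, the latter produced by Lemma~\ref{lemma1}. Such a ball has volume and perimeter close to the Euclidean values, so its isoperimetric defect is negligible; meanwhile its perimeter can be made so large that it swamps the bounded contribution of the fixed region. Together these force $\misoql(\tilde\Omega)$ to be as small as we like.

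To carry this out, I would first fix a bounded open set $\Omega'\subset M$ with smooth boundary and $\overline{\Omega}\subset\Omega'$; concretely one may take $\Omega'=\mathcal{B}_\rho$ for $\rho$ large, since the regions enclosed by coordinate spheres exhaust $M$ and $\overline{\Omega}$ is compact. Put $A:=|\Omega'|$ and $a:=|\partial\Omega'|$, now fixed positive constants. Next I would apply Lemma~\ref{lemma1} to the bounded set $\Omega'$ (in place of $\Omega$), with tolerance $1$ and a large volume parameter $V$ to be chosen, obtaining a bounded region $E$ with smooth boundary, at positive distance from $\Omega'$, with $v:=|E|\ge V$, $s:=|\partial E|\ge(36\pi)^{1/3}V^{2/3}$, and $\bigl|\,v-\tfrac{1}{6\sqrt{\pi}}s^{3/2}\,\bigr|\le 1$. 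Set $\tilde\Omega:=\Omega'\cup E$; this is a bounded open set containing $\Omega$, with boundary $\partial\Omega'\sqcup\partial E$ (using the positive distance), and hence $|\tilde\Omega|=A+v$ and $|\partial^*\tilde\Omega|=|\partial\tilde\Omega|=a+s$.

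It then remains to estimate $\misoql(\tilde\Omega)=\tfrac{2}{a+s}\bigl(A+v-\tfrac{1}{6\sqrt{\pi}}(a+s)^{3/2}\bigr)$. Writing $A+v-\tfrac{1}{6\sqrt{\pi}}(a+s)^{3/2}=A+\bigl(v-\tfrac{1}{6\sqrt{\pi}}s^{3/2}\bigr)-\tfrac{1}{6\sqrt{\pi}}\bigl((a+s)^{3/2}-s^{3/2}\bigr)$ and using the mean value theorem to bound $(a+s)^{3/2}-s^{3/2}\le\tfrac{3}{2}a\,(a+s)^{1/2}$, one obtains
\[
|\misoql(\tilde\Omega)|\le\frac{2(A+1)}{a+s}+\frac{a}{2\sqrt{\pi}\,(a+s)^{1/2}} .
\]
Both terms tend to $0$ as $s\to\infty$, hence as $V\to\infty$, so choosing $V$ large enough makes the right-hand side at most $\varepsilon$.

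I do not expect a genuine obstacle here. The only points needing a little care are that $\partial\tilde\Omega$ is a regular surface so that $|\partial^*\tilde\Omega|=|\partial\Omega'|+|\partial E|$, and the elementary bookkeeping in the last display. The structural reason the argument works is simply that the isoperimetric defect of $E$ stays bounded while its perimeter $s$ can be taken arbitrarily large, so the defect term --- which grows only like $\sqrt{s}$ --- is crushed by the normalization $2/|\partial^*\tilde\Omega|$.
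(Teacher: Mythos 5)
Your proposal is correct and follows essentially the same route as the paper: both form $\tilde\Omega$ as the union of a smooth bounded superset of $\Omega$ with a large, far-away region supplied by Lemma~\ref{lemma1}, and both observe that the cross terms in the volume and the expansion of $(a+s)^{3/2}$ contribute only $O(s^{1/2})$, which is killed by the $2/|\partial^*\tilde\Omega|$ normalization. The only difference is cosmetic --- you fix the defect tolerance at $1$ and send the volume parameter $V\to\infty$ with an explicit mean value theorem bound, whereas the paper runs a sequence $E_i$ with defect tending to $0$ and uses $O$-notation --- so there is nothing further to add.
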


\begin{proof}
Without loss of generality, if necessary, replace $\Omega$ with a bounded, open superset that has smooth boundary. Applying Lemma \ref{lemma1}, we can construct a sequence of bounded open sets $E_i\subset M$ with smooth boundary such that $E_i$ has positive distance from $\Omega$, $|E_i| \nearrow \infty$, $|\partial E_i| \nearrow \infty$, and
\[|E_i| - \frac{1}{6\sqrt{\pi}} |\partial E_i|^{3/2} \to 0\] as $i\to\infty$. 
We define $\tilde\Omega_i=E_i \cup \Omega$. It then holds that
\begin{align*}
    |\tilde \Omega_i | &= |E_i|+ |\Omega |  ,\\
    |\partial \tilde\Omega_i| &=  |\partial E_i|+ |\partial \Omega| .
\end{align*}

We compute
\begin{align*}
|\misoql(\tilde\Omega_i)|&=\frac{2}{|\partial \tilde \Omega_i|}\left||\tilde \Omega_i| - \frac{1}{6\sqrt{\pi}} |\partial \tilde \Omega_i|^{3/2}\right|\\ &= \frac{2}{|\partial  E_i| +|\partial \Omega|}\left| |E_i|+|\Omega|  - \frac{1}{6\sqrt{\pi}} \left( |\partial E_i| +|\partial \Omega|\right)^{3/2}\right|\\
&= \frac{1}{1+\frac{|\partial \Omega|}{|\partial E_i|}} \frac{2}{|\partial  E_i| }\left||E_i| + |\Omega| - \frac{1}{6\sqrt{\pi}} |\partial E_i|^{3/2} + O\left(|\partial E_i|^{1/2}\right)\right|\\
&\leq \frac{1}{1+\frac{|\partial \Omega|}{|\partial E_i|}} \frac{2}{|\partial E_i| }\left||E_i|  - \frac{1}{6\sqrt{\pi}} |\partial E_i|^{3/2}\right| + O\left(|\partial E_i|^{-1/2}\right),
\end{align*}
which limits to $0$ as $i \to \infty$. We then take $\tilde \Omega$ to be $\tilde \Omega_i$ for $i$ sufficiently large.
\end{proof}

\begin{proof}[Proof of Theorem \ref{thm_main}]
Let $(M,g)$ be any one-ended $C^0$-asymptotically flat $3$-manifold containing $(\mathcal E, g)$. Let $R_0>0$ be large enough so that the end $\mathcal E$ contains the coordinate sphere of radius $R_0$, and for all $R\ge R_0$, and let 
$\mathcal{B}_R$ denote the region of $M$ enclosed by the coordinate sphere of radius $R$. We construct a exhaustion sequence $\{\tilde\Omega_i\}$ by bounded open subsets of $M$ as follows: We start with $\tilde\Omega_1 = \mathcal{B}_{R_0}$, and
we inductively define $\tilde\Omega_{i+1}$ to be the resulting $\tilde\Omega$ obtained by applying Lemma~\ref{lemma2} to $\Omega=\tilde\Omega_{i}\cup \mathcal B_{R_0+i}$ with $\varepsilon=\frac{1}{i}$. This is clearly an exhaustion sequence with the property that 
$\displaystyle \lim_{i\to\infty} \misoql(\tilde\Omega_i) =0$, which implies that $\miso(M, g) \geq 0$.
\end{proof}

\medskip

\paragraph{\emph{Acknowledgments}:} R.U.~acknowledges support from the NSF grant DMS-2401966. We also thank G.~Antonelli for sharing an early preprint of~\cite{AFNP}.

\begin{bibdiv}
 \begin{biblist}

\bib{AFNP}{article}{
   author={Antonelli, G.},
   author={Fogagnolo, M.},
   author={Nardulli, S.},
   author={Pozzetta, M.},
   title={Positive mass and isoperimetry for continuous metrics with nonnegative scalar curvature},
   eprint={https://arxiv.org/abs/2403.15972},
}

\bib{ADM}{article}{
   author={Arnowitt, R.},
   author={Deser, S.},
   author={Misner, C.},
   title={Coordinate invariance and energy expressions in general relativity},
   journal={Phys. Rev. (2)},
   volume={122},
   date={1961},
   pages={997--1006},
}

\bib{Bray}{article}{
   author={Bray, H.},
   title={The Penrose inequality in general relativity and volume comparison theorems involving scalar curvature},
   journal={Thesis, Stanford University},
   date={1997}
}

\bib{CESY}{article}{
   author={Chodosh, O.},
   author={Eichmair, M.},
   author={Shi, Y.},
   author={Yu, H.},
   title={Isoperimetry, scalar curvature, and mass in asymptotically flat Riemannian 3-manifolds},
  journal={Comm. Pure Appl. Math.},
   volume={74},
   date={2021},
   number={4},
   pages={865--905},
}

\bib{CGGK}{article}{
   author={Corvino, J.},
   author={Gerek, A.},
   author={Greenberg, M.},
   author={Krummel, B.},
   title={On isoperimetric surfaces in general relativity},
   journal={Pacific J. Math.},
   volume={231},
   date={2007},
   number={1},
   pages={63--84}
}

\bib{EM}{article}{
    AUTHOR = {Eichmair, M.},
    author = {Metzger, J.},
     TITLE = {Unique isoperimetric foliations of asymptotically flat
              manifolds in all dimensions},
   JOURNAL = {Invent. Math.},
    VOLUME = {194},
      YEAR = {2013},
    NUMBER = {3},
     PAGES = {591--630},
}

\bib{FST}{article}{
   author={Fan, X.-Q.},
   author={Shi, Y.},
   author={Tam, L.-F.},
   title={Large-sphere and small-sphere limits of the Brown-York mass},
   journal={Comm. Anal. Geom.},
   volume={17},
   date={2009},
   number={1},
   pages={37--72},
}

\bib{Hui1}{article}{
     author={Huisken, G.},
     title={An isoperimetric concept for mass and quasilocal mass},
     journal={Oberwolfach Reports, European Mathematical Society (EMS), Z\"urich},
   date={2006},
   volume={3},
   number={1},
   pages={87--88}
}

\bib{Hui3}{article}{
     author={Huisken, G.},
     title={An isoperimetric concept for the mass in general relativity},
     journal={Oberwolfach Reports, European Mathematical Society (EMS), Z\"urich},
   date={2006},
   volume={3},
   number={3},
   pages={1898--1899}
}

\bib{Hui2}{article}{
     author={Huisken, G.},
     title={An isoperimetric concept for the mass in general relativity},
 	 eprint={https://www.ias.edu/video/marston-morse-isoperimetric-concept-mass-general-relativity},
     date={March 2009},
     journal={Accessed, 2021-09-01}
}

\bib{HI}{article}{
   author={Huisken, G.},
   author={Ilmanen, T.},
   title={The inverse mean curvature flow and the Riemannian Penrose
   inequality},
   journal={J. Differential Geom.},
   volume={59},
   date={2001},
   number={3},
   pages={353--437},
}

\bib{Jau}{article}{
   author={Jauregui, J.},
   title={ADM mass and the capacity-volume deficit at infinity},
   journal={Comm. Anal. Geom.},
   volume={31},
   date={2023},
   number={6},
   pages={1565--1610}
}

\bib{JL}{article}{
   author={Jauregui, J.},
   author={Lee, D.},
   title={Lower semicontinuity of mass under $C^0$ convergence and Huisken's
   isoperimetric mass},
   journal={J. Reine Angew. Math.},
   volume={756},
   date={2019},
   pages={227--257}
}

\bib{JL2}{article}{
   author={Jauregui, J.},
   author={Lee, D.},
   title={Lower semicontinuity of ADM mass under intrinsic flat convergence},
   journal={Calc. Var. Partial Differential Equations},
   volume={60},
   date={2021},
   number={5}
}

\bib{Lee:book}{book}{
   author={Lee, D.},
   title={Geometric relativity},
   series={Graduate Studies in Mathematics},
   volume={201},
   publisher={American Mathematical Society, Providence, RI},
   date={2019}
}

\bib{SY}{article}{
    author={Schoen, R.},
    author={Yau, S.-T.},
	title={On the proof of the positive mass conjecture in general relativity},
	journal={Comm. Math. Phys.},
	volume={65},
	year={1979},
	pages={45--76},
}

\bib{Shi}{article}{
	author={Shi, Y.},
	title={The isoperimetric inequality on asymptotically flat manifolds with nonnegative scalar curvature},
	journal={Int. Math. Res. Not.},
	volume={2016},
	year={2016},
        number={22},
	pages={7038–-7050},
}

\bib{Wit}{article}{
	author={Witten, E.},
	title={A new proof of the positive energy theorem},
	journal={Comm. Math. Phys.},
	volume={80},
	year={1981},
	pages={381--402},
}

\end{biblist}
\end{bibdiv}

\end{document}